\newcommand{\spc}{\vspace{0.5cm}}
\newcommand{\A}{{\bf{A}}}
\providecommand*{\cupdot}{%
  \mathbin{%
    \mathpalette\@cupdot{}%
  }%
}
\newcommand*{\@cupdot}[2]{%
  \ooalign{%
    $\m@th#1\cup$\cr
    \hidewidth$\m@th#1\cdot$\hidewidth
  }%
}
\newcommand{\footremember}[2]{%
    \footnote{#2}
    \newcounter{#1}
    \setcounter{#1}{\value{footnote}}%
}
\newcommand{\footrecall}[1]{%
    \footnotemark[\value{#1}]%
} 
\theoremstyle{plain} \newtheorem{lemmax}{\textbf{Lemma}}
\theoremstyle{remark} 
\theoremstyle{plain} \newtheorem{theoremx}{\textbf{Theorem}}
\theoremstyle{plain} 
\theoremstyle{plain} 
\theoremstyle{definition} 
\theoremstyle{plain} 
\theoremstyle{plain} 
\title{Clustering via Hypergraph Modularity}
\author{%
  Bogumi\l{} Kami\'nski\footremember{bkaff}{SGH Warsaw School of Economics, Warsaw, Poland}
  \and Val\'{e}rie Poulin\footremember{vpaff}{The Tutte Institute for Mathematics and Computing, Ottawa, ON, Canada.}%
  \and Pawe\l{} Pra\l{}at\footremember{ppaff}{Department of Mathematics, Ryerson University, Toronto, ON, Canada.}%
  \and Przemys\l{}aw Szufel\footrecall{bkaff}%
  \and Fran\c{c}ois Th\'{e}berge\footrecall{vpaff}%
}
\begin{document}

\maketitle

\begin{abstract}
Despite the fact that many important problems (including clustering) can be described using hypergraphs, theoretical foundations as well as practical algorithms using hypergraphs are not well developed yet. In this paper, we propose a hypergraph modularity function that generalizes its well established and widely used graph counterpart measure of how clustered a network is.
In order to define it properly, we generalize the Chung-Lu model for graphs to hypergraphs.
We then provide the theoretical foundations to search for an optimal solution with respect to our hypergraph modularity function. Two simple heuristic algorithms are described and applied to a few small illustrative examples.
We show that using a strict version of our proposed modularity function often leads to a solution where a smaller number of hyperedges get cut as compared to optimizing modularity of 2-section graph of a hypergraph.
\end{abstract}

{\bf Keywords}: graph theory; hypergraph; modularity; clustering; community detection

\section{Introduction}
\label{section:intro}

An important property of complex networks is their community structure, that is, the organization of vertices in clusters, with many edges joining vertices of the same cluster and comparatively few edges joining vertices of different clusters~\cite{Fortunato,Girvan}. In social networks communities may represent groups by interest, in citation networks they correspond to related papers, in the web communities are formed by pages on related topics, etc. Being able to identify communities in a network could help us to exploit this network more effectively. For example, clusters in citation graphs may help to find similar scientific papers, discovering users with similar interests is important for targeted advertisement, clustering can also be used for network compression and visualization.

\spc

The key ingredient for many clustering algorithms is \textit{modularity}, which is at the same time a global criterion to define communities, a quality function of community detection algorithms, and a way to measure the presence of community structure in a network. Modularity for graphs was introduced by Newman and Girvan~\cite{Mod2} and it is based on the comparison between the actual density of edges inside a community and the density one would expect to have if the vertices of the graph were attached at random regardless of community structure, while respecting the vertices' degree on average. This random family of graphs is known as the Chung-Lu random model \cite{chunglu}. 

\spc 

Myriad of problems can be described in hypergraph terms, however, despite being formally defined in the 1960s (and various realizations studied long before that) hypergraph theory is patchy and often not sufficiently general.  The result is a lack of machinery for investigating hypergraphs, leading researchers and practitioners to create the 2-section graph of a hypergraph of interest \cite{Zhou2007, Shi2000, Rodriguez2003, Zien1999, Agarwal2005, Agrawal2006} or to restrict their study to $d$-uniform hypergraphs \cite{Karypis2007, chien2018}. In taking the 2-section (that is, making each hyperedge a clique) we lose some information about edges of size greater than two. Sometimes losing this information does not affect our ability to answer questions of interest, but in other cases it has a profound impact. In particular, an important scenario when hypergraph-based approach can be preferred, is when a large hyperedge connecting some vertices is a strong indicator that they all belong to the same community. Such situations occur often in practice. Let us briefly discuss two simple examples. First consider e-mails as hyperedges of a hypergraph whose vertices are e-mail addresses. Multiple addresses in an e-mail (group e-mails) most likely are not sent to random people, but rather to some community of common interests. As a second example consider a platform like GitHub, where vertices are users and hyperedges are repositories linking users that committed to them. Again, if a group of users commits to the same repository it is most likely a strong indicator that they form some community. In both cases, as indicated above, replacing a hyperedge by a clique would lose valuable information. 

\spc

The paper is organized as follows.
In Section \ref{section:qH}, we review the Chung-Lu model for graphs and its link
to the modularity function. We then propose a generalization of the Chung-Lu model 
for hypergraphs, as well as a hypergraph modularity function.
In Section \ref{section:search}, we provide the framework to develop algorithms 
using our hypergraph modularity function.
Two simple heuristic algorithms are described in Section \ref{section:algos}, 
and applied to a few small illustrative examples in Section \ref{section:ex}.
This is a new measure we are proposing, and there is plenty of future work
to do, which we summarize in Section \ref{section:conclusion}.




\section{Hypergraph Modularity}
\label{section:qH}

In this section, we recall the definition of modularity function for graphs, and we propose its generalization for hypergraphs.
Throughout the paper we will use $n$ for the number of vertices. 
We will use $\binom{X}{k}$ for the set consisting of all $k$-element subsets of $X$. Finally, $[n]:=\{1, \ldots, n\}$.

\subsection{Chung-Lu Model for Graphs}

Let $G=(V,E)$ be a graph, where $V = \{v_1,\ldots ,v_n\}$ are the vertices, the edges $E$ are multisets of $V$ of cardinality 2 (loops allowed), and $deg_G(v)$ is the degree of $v$ in $G$ (with a loop at $v$ contributing 2 to the degree of $v$).  For $A \subseteq V$, let the {\it volume} of $A$ be $vol_G(A) = \sum_{v\in A} deg_G(v)$; in particular $vol_G(V) = \sum_{v\in V} deg_G(v) = 2|E|$. 
We will omit the subscript $G$ when the context is clear.

\spc

We define $\mathcal{G}(G)$ to be the probability distribution on graphs on the vertex set $V$ following the well-known Chung-Lu model~\cite{CL, Seshadhri2012, Kolda2014, Winlaw2015}. In this model, each set $e=\{v_i,v_j\}$, $v_i,v_j \in V$, is independently 
sampled as an edge with  probability given by:

\[
P(v_i,v_j) = 
\begin{cases}
\frac{deg(v_i) deg(v_j)}{2|E|}, & i \ne j \\
\frac{deg^2(v_i)}{4|E|}, & i = j.
\end{cases}
\]

Note that this model allows for loops (even if $G$ itself is a simple graph). Clearly, the model depends of the choice of $G$ but, in fact, it is only a function of the degree sequence of $G$. One desired property of this random model is that it yields a distribution that preserves the expected degree for each vertex, namely: for any $i \in [n]$,

\begin{eqnarray*}
\mathbb{E}_{G' \sim \mathcal{G}(G)}[deg_{G'}(v_i)] &=& \sum_{j \in [n] \setminus \{i\}} \frac{deg(v_i)deg(v_j)}{2|E|} + 2 \cdot \frac{deg^2(v_i)}{4|E|} \\
&=& \frac{deg(v_i)}{2|E|} \sum_{j \in [n]} deg(v_j) ~~=~~ deg(v_i),
\end{eqnarray*}

where all degrees are with respect to graph $G$.
This model will be useful to understand the graph modularity definition and its generalization to hypergraphs.

\subsection{Review of Graph Modularity}

The definition of modularity for graphs was first introduced by Newman and Girvan in~\cite{Mod2}. 
Despite some known issues with this function such as the ``resolution limit'' reported in \cite{Fortunato2007},
many popular algorithms for partitioning large graph data sets use it~\cite{CNM,Mod1,Newman}. 
It was also recently studied for some models of complex networks~\cite{random_graphs1,random_graphs2,random_graphs3}.
The modularity function favours partitions in which a large proportion of the edges fall entirely within the parts and biases against having too few or too unequally sized parts. 

\spc

For a graph $G=(V,E)$ and a given partition $\A = \{A_1, \ldots, A_k\}$ of $V$, the modularity function is defined as follows:

\begin{equation}\label{eq:q_G_A}
q_G(\A) = \frac{1}{|E|}\sum_{A_i \in \A} \big( e_G(A_i) -  \mathbb{E}_{G' \sim \mathcal{G}(G)}[ e_{G'}(A_i)] \big),
\end{equation}

where $e_G(A_i) = |\{ \{v_j,v_k\} \in E : v_j, v_k \in A_i\}|$ is the number of edges in the subgraph of $G$ induced by the set $A_i$. The modularity measures the deviation of the number of edges of $G$ that lie inside parts of $\A$ from the corresponding expected value based on the Chung-Lu distribution $\mathcal{G}(G)$. The expected value for part $A_i$ is

\begin{eqnarray*}
\mathbb{E}_{G' \sim \mathcal{G}(G)}[ e_{G'}(A_i)] &=&
\sum_{\{v_j,v_k\} \in \binom{A_i}{2}} \frac{deg(v_j)deg(v_k)}{2|E|} + \sum_{v_j \in A_i} \frac{deg^2(v_j)}{4|E|} \\
&=& \frac{1}{4|E|} \left( \sum_{v_j \in A_i}deg(v_j)\right)^2 = \frac{(vol(A_i))^2}{4|E|}. 
\end{eqnarray*}

The first term in~(\ref{eq:q_G_A}), $\sum_{A_i \in \A} e_G(A_i)/|E|$, is called the \emph{edge contribution}, whereas the second one, $\sum_{A_i \in \A} (vol(A_i))^2 / (4|E|)$, is called the \emph{degree tax}. 
It is easy to see that $q_G(\A) \le 1$. 
Also, if $\A = \{V\}$, then $q_G(\A) = 0$, and
if $\A = \{ \{v_1\}, \ldots, \{v_n\}\}$, then 
$q_G(\A) = - \frac {\sum deg(v)^2}{4|E|^2} < 0.
$ This is often used as a starting point for algorithms, including the ones we investigate in this paper. 

\spc

The \emph{modularity} $q^*(G)$ is defined as the maximum of $q_G(\A)$ over all possible partitions $\A$ of $V$; that is,
$
q^*(G) = \max_{\A} q_G(\A).
$
In order to maximize $q_G(\A)$ one wants to find a partition with large edge contribution subject to small degree tax. If $q^*(G)$ approaches 1 (which is the trivial upper bound), we observe a strong community structure; conversely, if $q^*(G)$ is close to zero (which is the trivial lower bound), there is no community structure. The definition in (\ref{eq:q_G_A}) can be generalized to weighted edges by replacing edge counts with sums of edge weights.

\subsection{Generalization of Chung-Lu Model to Hypergraphs}

Consider a hypergraph $H = (V, E)$ with $V=\{v_1, \ldots, v_n\}$, where hyperedges $e \in E$ are subsets of $V$ of cardinality greater than one. Since we are concerned with not necessarily simple hypergraphs, hyperedges are multisets. Such hyperedges can be described using distincts sets of pairs $e = \{(v,m_e(v)): v \in V\}$ where $m_e(v) \in \mathbb{N} \cup \{0\}$ is the multiplicity of the vertex $v$ in $e$ (including zero which indicates that $v$ is not present in $e$). Then $|e| = \sum_v m_e(v)$ is the {\it size} of hyperedge $e$ and the {\it degree of a vertex} $v$ in $H$ is defined as $deg_H(v) = \sum_{e \in E} m_e(v)$. When the reference to the hyperedge is clear from the context, we simply use $m_i$ to denote $m_e(v_i)$.

\spc

A hypergraph is said to be $d$-{\it uniform} if all its hyperedges have size $d$. In particular, $2$-uniform hypergraph is simply a graph. All hypergraphs $H$ can be expressed as the disjoint union of $d$-uniform hypergraphs
$H = \bigcup H_d$, where $H_d = (V, E_d)$, $E_d \subseteq E$ are all hyperedges of size $d$, and $deg_{H_d}(v)$ is the $d$-degree of vertex $v$. 
Just as for graphs, the volume of a vertex subset $A \subseteq V$ is $vol_H(A) = \sum_{v \in A} deg(v)$.


\spc

Similarly to what we did for graphs, we define a random model on hypergraphs, $\mathcal{H}(H)$, where the expected degrees of all vertices are the degrees in $H$.
To simplify the notation, we omit the explicit references to $H$ in the remaining of this section; in particular, $deg(v)$ denotes $deg_H(v)$, $\mathcal{H}$ denotes $\mathcal{H}(H)$, $E_d$ denotes the edges of $H$ of size $d$, etc. Moreover, we use $E'$ to denote the edge set of $H'$.

\spc

Let $F_d$ be the family of multisets of size $d$; that is, 

$$
F_d := \left\{ \{(v_i,m_i): 1 \le i \le n\} : \sum_{i=1}^n m_i = d \right\}.
$$

The hypergraphs in the random model 
are generated via independent random experiments. 
For each $d$ such that $|E_d|>0$, the probability of generating 
the edge $e \in F_d$ is given by:

\begin{eqnarray}
P_{\mathcal{H}}(e) = |E_d| \cdot {d \choose m_1,\ldots,m_n} \prod_{i=1}^n \left(
\frac{deg(v_i)}{vol(V)}\right)^{m_i}.
\label{eq:prob_tot} 
\end{eqnarray}

Let $(X_1^{(d)}, \ldots, X_n^{(d)})$ be the random vector 
following a multinomial distribution 
with parameters $d, p_{\mathcal{H}}(1), \ldots, p_{\mathcal{H}}(n)$; that is,

$$
s_{\mathcal{H}}(e) := \mathbb{P} \Big( (X_1^{(d)}, \ldots, X_n^{(d)}) = (m_1, \ldots m_n) \Big) = {d \choose m_1,\ldots,m_n} \prod_{i=1}^n (p_{\mathcal{H}}(i))^{m_i}
$$

where $p_{\mathcal{H}}(i) = deg(v_i)/vol(V)$ and $\sum_{i \in [n]} p_{\mathcal{H}}(i) = 1$. 
Note that this is the expression found in (\ref{eq:prob_tot}); 
that is, $P_{\mathcal{H}}(e) = |E_d| \cdot s_{\mathcal{H}}(e)$.
An immediate consequence is that the expected number of edges of size $d$ is $|E_d|$. 
Finally, as with the graph Chung-Lu model, we assume that all $P_{\mathcal{H}}(e) \le 1$.

\spc

In order to compute the expected $d$-degree of a vertex $v_i \in V$, 
note that 

$$
deg_{H'_d}(v_i) = \sum_{e \in F_d} m_e(v_i) \cdot \mathbb{I}_{\{e \in E'\}},
$$

where $\mathbb{I}_{\{\}}$ is the indicator random variable. 
Hence, using the linearity of expectation, then splitting the sum into $d+1$ partial sums for different multiplicities of $v_i$, we get:

\begin{align*}
\mathbb{E}_{H' \sim \mathcal{H}} \big( deg_{H'_d}(v_i) \big) &= \sum_{e \in F_d} m_e(v_i) \cdot P_{\mathcal{H}}(e) = |E_d| \sum_{e \in F_d} m_e(v_i) \cdot s_{\mathcal{H}}(e) \\
& = |E_d| \sum_{m=0}^d m \sum_{e \in F_d; m_e(v_i)=m} s_{\mathcal{H}}(e) \\
& = |E_d| \sum_{m=0}^d m \cdot \mathbb{P}(X_i^{(d)} = m) \\
& = |E_d| \sum_{m=0}^d m \cdot {d \choose m} (p_{\mathcal{H}}(i))^m (1-p_{\mathcal{H}}(i))^{d-m} \\
& = |E_d| \cdot d \cdot p_{\mathcal{H}}(i).
\end{align*}

The second last equality follows from the fact that we obtained the expected value of a random variable with binomial distribution.
One can compute the expected degree as follows:

$$
\mathbb{E}_{H' \sim \mathcal{H}}[deg_{H'}(v_i)] =
\sum_{d\ge 2} \frac{d \cdot |E_d| \cdot deg(v_i)}{vol(V)} = deg(v_i),
$$
since $vol(V) = \sum_{d\ge 2} d\cdot |E_d|$. 

\spc

We will use the generalization of the Chung-Lu model to hypergraphs as a null model allowing us to define hypergraph modularity.

\subsection{Strict Hypergraph Modularity}

Consider a hypergraph $H=(V, E)$ and $\A = \{A_1, \ldots, A_k\}$, a partition of $V$.
For edges of size greater than 2, several definitions can be used to quantify the edge contribution given
$\A$, such as:

\begin{itemize}
\item [(a)] all vertices of an edge have to belong to one of the parts to contribute; 
this is a \emph{strict} definition that we focus on in this paper; 
\item [(b)] the majority of vertices of an edge belong to one of the parts;
\item [(c)] at least 2 vertices of an edge belong to the same part; this is implicitly used when we replace a hypergraph with its 2-section graph representation.
\end{itemize}

We see that the choice of hypergraph modularity function is not unique; in fact, it depends on how strongly we believe that an existence of a hyperedge is an indicator that vertices belonging to it fall into one community. More importantly, one needs to decide how often vertices in one community ``blend'' together with vertices from other community; that is, how hermetic the community is. In particular, option (c) is the \emph{softest} one and leads to standard 2-section graph modularity. Therefore, following the motivation presented in the Introduction in this text, we concentrate on the second extreme, option (a), that we call \emph{strict}. In this case, the definition of edge contribution for $A_i \subseteq V$ is:

\begin{equation}\label{eq:ec}
e_H(A_i) = |\{ e \in E ;~ e \subseteq A_i \}|.
\end{equation}

Again, this is a natural choice: for 2-section, an edge contributes as long as at least 2 vertices belong to the same part (the weakest condition) whereas for the strict modularity, the requirement is that \emph{all} vertices belong to the same part (the strongest condition). Analyzing it will allow us to investigate if differentiation between strict hypergraph modularity and 2-section graph modularity leads to qualitatively different results. All other possible definitions would cover the space between the two extremes we consider. We concentrate on strict modularity but we will show how to easily generalize our formulas to other scenarios. 

\spc

The strict modularity of $\A$ on $H$ is then defined as a natural extension of standard modularity in the following way:

\begin{equation}\label{eq:q_A_H}
q_H(\A) = \frac{1}{|E|} \sum_{A_i \in \A} \left( e_H(A_i) - \mathbb{E}_{H' \sim \mathcal{H}}[e_{H'}(A_i)]  \right).
\end{equation}

\subsubsection{A Formula for Expected Edge Contribution}

Consider any $A \subseteq V$. We want to compute the expected edge contribution of $A$ over $\mathcal{H}$. 
Let $F_d(A) \subseteq F_d$ be the family of multisets of size $d$ with all members in $A$; that is, 

$$
F_d(A) := \left\{ \{(v_i,m_i): 1 \le i \le n\} : \sum_{i=1}^n m_i = \sum_{i:v_i \in A} m_i = d \right\}.
$$

First, note that 

\begin{eqnarray*}
\mathbb{E}_{H' \sim \mathcal{H}}[e_{H'}(A)] & = & \sum_{d \geq 2} \sum_{e \in F_d(A)} P_{\mathcal{H}}(e) = \sum_{d \geq 2} |E_d| \sum_{e \in F_d(A)} s_{\mathcal{H}}(e) \\
 &=& \sum_{d \geq 2} |E_d| \cdot \mathbb{P}\left( \sum_{i; v_i \in A} X_i^{(d)} = d \right) 
 = \sum_{d \geq 2} |E_d| \cdot (p_{\mathcal{H}}(A))^d
\end{eqnarray*}

where $p_{\mathcal{H}}(A) = \sum_{i; v_i\in A} p_{\mathcal{H}}(i)$,
therefore $p_{\mathcal{H}}(A) = vol(A)/vol(V)$, so

\begin{eqnarray}
\mathbb{E}_{H' \sim \mathcal{H}}[e_{H'}(A)] & =
\sum_{d \geq 2} |E_d| \cdot (vol(A)/vol(V))^d.
\label{eq:exp_contrib}
\end{eqnarray}

Putting (\ref{eq:exp_contrib}) properly into equation~(\ref{eq:q_A_H}), we get the strict modularity function of a hypergraph partition:

\begin{eqnarray} \label{eq:modularity}
q_{H}({\mathbf A}) &=& \frac{1}{|E|}  \left(\sum_{A_i \in {\bf A}} e(A_i)   - 
\sum_{d \geq 2} |E_d|\sum_{A_i \in {\bf A}} \left( \frac{vol(A_i)}{vol(V)} \right)^d \right).
\label{eq:q_H}
\end{eqnarray}

Just as for graphs, the corresponding \emph{modularity} $q_{H}^*$ is 
defined as the maximum of $q_{H}(\A)$ over all possible partitions $\A$ of $V$.

\subsubsection{Independent modularities}\label{sec:independent}

In some applications, we may want to consider the modularity independently over various
subsets of edges. For examples, a hypergraph may consist of hyperedges obtained from 
different sources (for example, perhaps we collect data from several independent social networks), or we may want to handle hyperedges of different sizes separately.

\spc

Let $H=(V,E)$ and write $E = \bigcup_{i=1}^k E_i$, a disjoint union of the hyperedges.
We let $H_i=(V,E_i)$ and we define a independent modularity function:

$$
q_H^I({\mathbf A}) = \sum_{i=1}^k  w_i q_{H_i}({\mathbf A})
$$

with the $q_{H_i}({\mathbf A})$ as defined in (\ref{eq:q_H}), and $w_i$ some weights such that $\sum_{i=1}^k w_i = 1$,
with natural choice being $w_i = |E_i| / |E|$.

\spc

For example, if we decompose $H$ into $d$-uniform hypergraphs $H_d$, we
get the following degree-independent modularity function using the 
natural weights:

$$
q_H^{DI}({\mathbf A}) = \sum_{d \ge 2} \frac{|E_d|}{|E|} q_{H_d}({\mathbf A}),
$$

where $H_d=(V,E_d)$ contains all edges of size $d$ in $H$.
This corresponds to (\ref{eq:q_H}) replacing the volumes computed over $H$
with volumes computed over $H_d$ for each $d$ where $|E_d|>0$.

\subsubsection{Generalizations}\label{sec:generalization}

As with graphs, one can easily generalize the modularity function to allow for weighted hyperedges. Let us also mention that we focus on the strict definition in this paper but
it is straightforward to adjust the degree tax to many natural definitions of edge contribution. In particular, for the majority definition (see (b) at the beginning of this section), one can simply replace 
$\mathbb{P} \left( \sum_{i; v_i \in A} X_i^{(d)} = d \right)$ 
with 
$\mathbb{P} \left( \sum_{i; v_i \in A} X_i^{(d)} > d/2 \right)$ 
in (\ref{eq:exp_contrib}).

\section{Searching the Solution Space}
\label{section:search}

In this section, we show that the solution that maximizes (\ref{eq:q_H}) lies in a subset of ${\cal P}(V)$ of size at  most $2^{|E|}$ avoiding the search of the full set ${\cal P}(V)$.
This will be useful in designing efficient heuristic algorithms.

\spc

Let ${\cal S}(H)$ denote the set of all sub-hypergraphs of $H=(V,E)$ on the vertex set $V$: 
${\cal S}(H) = \{H'=(V,E')~|~ E' \subseteq E\}$. We use $|H'|$ to denote $|E'|$, the number of edges in $H'$. 
Moreover, let $p: {\cal S}(H) \rightarrow {\cal P}(V)$ denote the function that sends a sub-hypergraph of $H$ to the partition its connected components induce on $V$. 
We define a relation on ${\cal S}(H)$:
 $$H_1 \equiv_p H_2 \iff p(H_1)=p(H_2)$$
that puts two sub-hypergraphs in relation if they have identical connected components.
Since $\equiv_p$ is an equivalence relation (based on equality), we can define the quotient set ${\cal S}(H)/_{\equiv_p}$. 
This quotient set contains equivalence classes that are in bijection 
with the set of all different vertex partitions that can be induced by the union of elements of $E$. 
Its cardinality depends on $E$ but is at most $2^{|E|}$; however, it is typically much smaller than this trivial upper bound.

\spc

Now, let us define the {\it canonical representative mapping} which identifies a natural representative member for each equivalence class. The canonical representative mapping
$f : {\cal S}(H)/_{\equiv_p} \rightarrow {\cal S}(H)$ maps an equivalence class to the largest member of this class:
$f([H']) = H^*$ where  $H^* \in [H']$ and $|H^*| \geq |H''|$ for all $H'' \in [H'].$
This function is well-defined; indeed, if $H_1, H_2 \in [H']$, then the union of $H_1$ and $H_2$ is also in $[H']$ and so it is impossible that two members have the largest size.
Its outcome is the subgraph $H^* = (V, E^*)$ whose edge set is the union of edges of all members of the equivalence class. The following lemma explains 
why the canonical representative is {\it natural} with respect to the definition of strict modularity.
As this observation follows easily from definitions, the proof is omitted. 


\begin{lemmax}
Let $H = (V,E)$ be a hypergraph and ${\bf A}=\{A_1,\ldots,A_k\}$ be any partition of $V$. If there exists $H' \in {\cal S}(H)$ such that ${\bf A} = p(H')$, then the
edge contribution of the strict modularity of ${\bf A}$ is $\frac{|E^*|}{|E|}$, where $E^*$ is the edge set of the canonical representative of $[H']$.
\end{lemmax}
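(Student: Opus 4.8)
The plan is to unwind the definitions and show that the edge contribution $\sum_{A_i \in {\bf A}} e_H(A_i)$ equals $|E^*|$, after which dividing by $|E|$ gives the claim directly from~(\ref{eq:modularity}). By~(\ref{eq:ec}), $\sum_{A_i \in {\bf A}} e_H(A_i)$ counts the hyperedges $e \in E$ that are entirely contained in some part $A_i$ of ${\bf A}$, so it suffices to prove that $E^* = \{e \in E : e \subseteq A_i \text{ for some } i\}$.

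First I would fix a sub-hypergraph $H' \in {\cal S}(H)$ with ${\bf A} = p(H')$, and recall that by construction $E^*$ is the union of the edge sets of all members of $[H']$; equivalently, $H^* = (V,E^*)$ is the \emph{largest} sub-hypergraph of $H$ whose connected components induce the partition ${\bf A}$. For the inclusion $E^* \subseteq \{e : e \subseteq A_i \text{ for some } i\}$, I would note that any edge $e \in E^*$ lies in some member of $[H']$, hence in a sub-hypergraph whose components refine into the parts of ${\bf A}$; since an edge joins vertices lying in a single connected component, all vertices of $e$ lie in one part $A_i$. For the reverse inclusion, take any $e \in E$ with $e \subseteq A_i$ for some $i$: then adding $e$ to $H^*$ does not merge any two distinct parts (all endpoints of $e$ are already in $A_i$, which is connected in $H^*$ or a union of components within $A_i$ — more carefully, $e$ only creates edges within $A_i$), so $p(H^* + e) = p(H^*) = {\bf A}$, and by maximality of $H^*$ we must have $e \in E^*$ already.

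The one subtlety worth handling carefully is that $p(H')$ records the partition into connected components, and a part $A_i$ need not be connected in an arbitrary representative $H'$ — it could be a union of several components of $H'$. What matters is only that no two distinct parts ever get joined; adding an edge $e \subseteq A_i$ cannot do that, which is the crux of the maximality argument. This is the step I expect to require the most care, but it is still elementary: it just amounts to observing that connected components are "closed under adding intra-part edges."

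Since all of this follows immediately from the definitions of $p$, of the canonical representative $f$, and of strict edge contribution, the proof is genuinely short, which is presumably why the authors chose to omit it. I would present it in three lines: rewrite the edge contribution via~(\ref{eq:ec}), identify it with $|E^*|$ using the two inclusions above, and conclude by~(\ref{eq:modularity}) that the edge-contribution term of $q_H({\bf A})$ is $|E^*|/|E|$.
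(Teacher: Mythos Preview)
Your argument is correct and is precisely the elementary unwinding the authors have in mind; the paper in fact omits the proof entirely, remarking only that it ``follows easily from definitions.''

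One clarification: the ``subtlety'' you flag does not actually arise. By the definition of $p$, if $p(H'') = \mathbf{A}$ then the parts $A_i$ are \emph{exactly} the connected components of $H''$, so each $A_i$ is a single connected component of every representative of $[H']$ --- in particular of $H^*$. It is therefore impossible for $A_i$ to be ``a union of several components of $H'$.'' Consequently, adding an edge $e \subseteq A_i$ to $H^*$ trivially preserves the component structure (you are adding an edge inside an already-connected piece), so $p(H^* \cup \{e\}) = \mathbf{A}$ is immediate and the reverse inclusion needs none of the extra care you anticipated.
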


\spc

The set of canonical representatives, the image of $f$, is a subset of ${\cal S}(H)$. We denote this set by ${\cal S^*}(H)$ and the image of $p$ restricted to ${\cal S^*}(H)$ by ${\cal P^*}(V)$.

The next Lemma shows how the degree tax behaves on partition refinement.
\begin{lemmax}
Let $H = (V,E)$ be a hypergraph and ${\bf A}$ be any partition of $V$. If ${\bf B}$ is a refinement of ${\bf A}$, then the degree tax of ${\bf A}$ is larger than or equal to the degree tax of ${\bf B}$ 
and it is equal if and only if ${\bf A} = {\bf B}$.
\end{lemmax}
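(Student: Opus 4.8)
The plan is to reduce the statement to a single elementary super-additivity inequality for the map $x \mapsto x^{d}$ on the nonnegative reals, applied volume-by-volume inside each part of ${\bf A}$. Recall from~(\ref{eq:q_H}) that the degree tax of a partition ${\bf C} = \{C_1,\dots,C_r\}$ of $V$ is
$$
\mathrm{DT}({\bf C}) \;=\; \frac{1}{|E|}\sum_{d \ge 2} |E_d| \sum_{j=1}^{r} \left(\frac{vol(C_j)}{vol(V)}\right)^{d}
\;=\; \sum_{d \ge 2} c_d \, g_d({\bf C}),
\qquad c_d := \frac{|E_d|}{|E|\cdot vol(V)^{d}}\ \ge 0,\quad g_d({\bf C}) := \sum_{j=1}^{r} vol(C_j)^{d}.
$$
Since every coefficient $c_d$ is nonnegative and at least one of them is positive (because $E\neq\emptyset$), it suffices to prove that for each fixed $d\ge 2$ one has $g_d({\bf A}) \ge g_d({\bf B})$ whenever ${\bf B}$ refines ${\bf A}$, and that this inequality is in fact strict for every such $d$ as soon as ${\bf A}\neq{\bf B}$.

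First I would fix $d\ge 2$ and group the parts of ${\bf B}$ according to the part of ${\bf A}$ containing them: write $A_i = B_{i,1} \,\cupdot\, \cdots \,\cupdot\, B_{i,m_i}$, so that $vol(A_i) = \sum_{\ell=1}^{m_i} vol(B_{i,\ell})$ by additivity of volume over disjoint sets. Then $g_d({\bf A}) \ge g_d({\bf B})$ follows term by term from the elementary inequality
$$
\Big(\textstyle\sum_{\ell=1}^{m} x_\ell\Big)^{d} \;\ge\; \sum_{\ell=1}^{m} x_\ell^{\,d}
\qquad\text{for all } x_1,\dots,x_m \ge 0,\ d \ge 1,
$$
which I would establish by induction on $m$, the base case $m=2$ being the binomial expansion $(a+b)^{d} = a^{d} + b^{d} + \sum_{k=1}^{d-1}\binom{d}{k} a^{k} b^{\,d-k} \ge a^{d} + b^{d}$. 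Summing over $i$ gives $g_d({\bf A}) \ge g_d({\bf B})$, and then weighting by the nonnegative $c_d$ and summing over $d$ yields $\mathrm{DT}({\bf A}) \ge \mathrm{DT}({\bf B})$.

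For the equality characterization I would revisit the same expansion: when $d\ge 2$ the inequality $(a+b)^{d}\ge a^{d}+b^{d}$ is strict unless $a=0$ or $b=0$, and inductively $\big(\sum_\ell x_\ell\big)^{d} = \sum_\ell x_\ell^{d}$ forces all but at most one of the $x_\ell$ to vanish. Hence, if ${\bf A}\neq{\bf B}$, some part $A_i$ is split by ${\bf B}$ into at least two parts of positive volume, the corresponding term strictly decreases for every $d\ge 2$, and since $c_d>0$ for at least one $d$ we conclude $\mathrm{DT}({\bf A}) > \mathrm{DT}({\bf B})$; the reverse implication ${\bf A}={\bf B}\Rightarrow$ equality is trivial. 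The only delicate point — the main (mild) obstacle — is this last bookkeeping: one must ensure that a nontrivial split actually separates two pieces of positive volume rather than merely peeling off vertices of degree $0$. This is handled by the standing convention that $H$ has no isolated vertices (such vertices affect neither the edge contribution nor the degree tax and may be discarded beforehand); with that convention in force the ``if and only if'' holds as stated.
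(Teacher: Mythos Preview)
Your proof is correct and follows essentially the same route as the paper's: group the parts of ${\bf B}$ inside each $A_i$, use additivity of volume, and apply the power-sum inequality $\big(\sum_\ell x_\ell\big)^d \ge \sum_\ell x_\ell^{\,d}$ term by term, then sum over $d$ with the nonnegative weights coming from the degree tax. Your handling of the equality case is in fact more careful than the paper's --- the paper asserts without further comment that equality forces $|{\bf B}_i|=1$ for all $i$, whereas you correctly observe that this step requires every vertex to have positive degree (otherwise a part could be refined by peeling off degree-zero vertices with no change to any volume), and you resolve it by the harmless convention of discarding isolated vertices.
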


\begin{proof} Let ${\bf A}=\{A_1,\ldots,A_k\}$. Since ${\bf B}$ is a refinement of ${\bf A}$, for each part of ${\bf A}$, $A_i$, there exists ${\bf B}_i$, a subset of parts of ${\bf B}$, such that
$A_i = \bigcup_{B \in {\bf B}_i}B$ and ${\bf B} = \bigcup_i {\bf B}_i$. Hence, for each $A_i$ and for each $d$, we have that $vol_d(A_i) = \sum_{B \in {\bf B}_i} vol_d(B)$ and so
$$vol_d(A_i)^d = \left(\sum_{B \in {\bf B}_i} vol_d(B) \right)^d \geq \sum_{B \in {\bf B}_i} vol_d(B)^d.$$
The equality holds if and only if $|{\bf B}_i| = 1$ for all $i$. The result follows.
\end{proof}

\spc

The next result, the main result of this section, shows that one can restrict the search space to canonical representatives from ${\cal S^*}(H)$.

\begin{theoremx}
Let $H = (V,E)$ be a hypergraph. If ${\bf A} \in {\cal P}(V)$ maximizes the strict modularity function $q_H(\cdot)$, then 
${\bf A} \in {\cal P}^*(V)$.
\end{theoremx}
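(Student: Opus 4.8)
The plan is to argue by contradiction: suppose $\mathbf{A} \in \mathcal{P}(V)$ maximizes $q_H(\cdot)$ but $\mathbf{A} \notin \mathcal{P}^*(V)$. The intuition is that if $\mathbf{A}$ is not a canonical partition, then it ``wastes'' part of its coarseness: its parts are larger than they need to be to collect the edges that lie inside them, and by refining $\mathbf{A}$ we can keep the edge contribution fixed while strictly decreasing the degree tax, contradicting optimality.

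First I would make precise the following dichotomy for an arbitrary partition $\mathbf{A}$. Let $E_{\mathbf{A}} = \{e \in E : e \subseteq A_i \text{ for some } i\}$ be the set of edges that lie entirely inside some part, and let $H_{\mathbf{A}} = (V, E_{\mathbf{A}})$. Then $p(H_{\mathbf{A}})$ is a refinement of $\mathbf{A}$ (each connected component of $H_{\mathbf{A}}$ sits inside a single part of $\mathbf{A}$, since every edge of $H_{\mathbf{A}}$ is inside a part). Moreover, by construction every edge counted in the edge contribution of $\mathbf{A}$ still lies inside a part of $p(H_{\mathbf{A}})$: indeed $e \in E_{\mathbf{A}}$ is connected in $H_{\mathbf{A}}$, so it is contained in one component. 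Hence $e_H(A_i)$ summed over $\mathbf{A}$ equals $|E_{\mathbf{A}}|$, and the same sum over $\mathbf{B} := p(H_{\mathbf{A}})$ is also $|E_{\mathbf{A}}|$ — the edge contribution is unchanged when passing from $\mathbf{A}$ to $\mathbf{B}$. (This is essentially the content of the first Lemma together with the observation that $H_{\mathbf{A}} \in \mathcal{S}(H)$ and $[H_{\mathbf{A}}]$ has canonical representative with edge set $E_{\mathbf{A}}$, so $\mathbf{B} \in \mathcal{P}^*(V)$.)

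Next I would invoke the second Lemma: since $\mathbf{B}$ is a refinement of $\mathbf{A}$, the degree tax of $\mathbf{B}$ is at most that of $\mathbf{A}$, with equality iff $\mathbf{A} = \mathbf{B}$. Combining the two observations, $q_H(\mathbf{B}) \geq q_H(\mathbf{A})$ with equality iff $\mathbf{A} = \mathbf{B}$. Now if $\mathbf{A} \notin \mathcal{P}^*(V)$ then in particular $\mathbf{A} \neq \mathbf{B}$ (because $\mathbf{B} \in \mathcal{P}^*(V)$), so the inequality is strict: $q_H(\mathbf{B}) > q_H(\mathbf{A})$, contradicting the maximality of $\mathbf{A}$. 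Therefore $\mathbf{A} \in \mathcal{P}^*(V)$.

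The main obstacle, and the step I would write out most carefully, is verifying that $\mathbf{B} = p(H_{\mathbf{A}})$ genuinely lies in $\mathcal{P}^*(V)$ and that the edge contribution is exactly preserved — i.e. that no edge inside a part of $\mathbf{A}$ gets ``split'' when we pass to connected components of $H_{\mathbf{A}}$, and conversely that no new edge of $E \setminus E_{\mathbf{A}}$ suddenly fits inside a component of $\mathbf{B}$. The first direction is immediate since each edge of $H_{\mathbf{A}}$ connects its own vertices; the second requires noting that a component of $H_{\mathbf{A}}$ is contained in a part of $\mathbf{A}$, so any edge inside such a component is inside that part, hence already in $E_{\mathbf{A}}$ — meaning $E_{\mathbf{B}} = E_{\mathbf{A}}$ and the canonical representative of $[H_{\mathbf{A}}]$ has exactly this edge set. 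Once this bookkeeping is nailed down, the rest is a one-line application of the two Lemmas.
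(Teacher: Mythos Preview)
Your proof is correct and follows essentially the same approach as the paper: both construct $H_{\mathbf A}=(V,E_{\mathbf A})$ with $E_{\mathbf A}=\{e\in E: e\subseteq A_i\text{ for some }i\}$, observe that $p(H_{\mathbf A})$ refines $\mathbf A$ with identical edge contribution, and then apply the refinement lemma on the degree tax to force $\mathbf A=p(H_{\mathbf A})$. Your write-up is in fact slightly more careful than the paper's, since you explicitly verify that $E_{\mathbf B}=E_{\mathbf A}$ and hence that $H_{\mathbf A}$ is its own canonical representative (so $\mathbf B\in\mathcal P^*(V)$), a point the paper asserts without spelling out.
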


\begin{proof}
Assume that ${\bf A}=\{A_1,\ldots,A_k\}$ maximizes the strict modularity function $q_H(\cdot)$. We will show that there exists $H^*=(V,E^*) \in {\cal S}^*(H)$ such that $q_H(p(H^*)) \geq q_H({\bf A})$.
Let $E^* = \{e \in E : e \subseteq A_i \text{ for some } i \}$. 
By construction of $H^*$, the (strict) edge contribution of partitions ${\bf A}$ and $p(H^*)$ are identical. 
Again, from construction, note that the partition $p(H^*)$ is a refinement of ${\bf A}$. Hence, the previous Lemma states that the degree tax of ${\bf A}$ is larger than or 
equal to the degree tax of $p(H^*)$. With equal edge contribution, this means that $q_H(p(H^*)) \geq q_H({\bf A})$. Since ${\bf A}$ is an optimal solution, the equality must 
hold which is only possible if ${\bf A} = p(H^*)$.
\end{proof}

\subsection{Illustration}
\label{section:illustration}

We illustrate the concepts of equivalence classes and vertex partitions with a simple hypergraph $H=(V,E)$
shown in Figure \ref{fig1}, where $V=\{v_1,v_2,v_3,v_4,v_5\}$ and $E=\{e_1,e_2,e_3\}$, with $e_1=\{v_1,v_2,v_3\}$,
$e_2=\{v_3,v_4,v_5\}$ and $e_3=\{v_1,v_4\}$. 
The number of partitions of $v$ is $B_5=52$ and the number of subsets of $E$ is only $2^3=8$.

\begin{figure}[!ht]
\begin{center}
\includegraphics[angle=0,width=4.5cm]{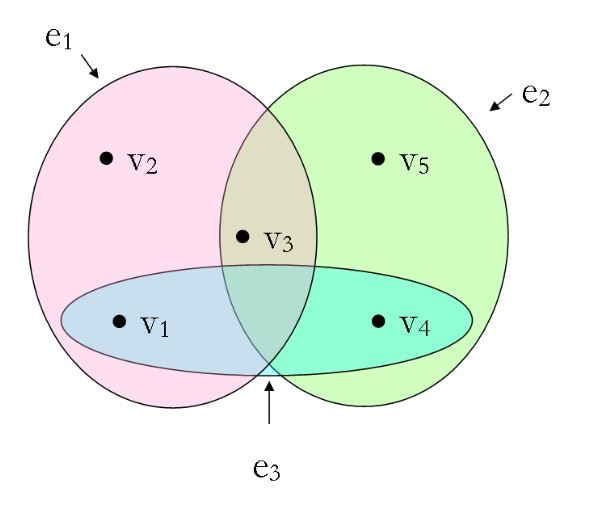}
\caption{Hypergraph with $n=5$ vertices and $m=3$ hyperedges.}
\label{fig1}
\end{center}
\end{figure}

\spc

In Table~\ref{tab1}, we enumerate all the subgraphs $H_i \in {\cal S}(H)$ by considering all 
subsets of $E$. In each case, we write the corresponding partition on $V$
induced by its connected components. We see for example that sub-hypergraphs $H_4$ and $H_7$ are in the same equivalence class, with
corresponding vertex partition: $\{ \{v_1, v_2,v_3, v_4, v_5\} \}$. 
The canonical representative of this class is the sub-hypergraph $H_7$.

\begin{table}[!ht]
\begin{center}
\begin{tabular}{c|c|c}
$i$ & $E_i \subseteq E$ & $p(H_i),~H_i=(V,E_i)$ \\
\hline
0 & $\emptyset$ & $\{ \{v_1\},\{v_2\},\{v_3\},\{v_4\},\{v_5\} \}$ \\
1 & $\{e_1\}$ & $\{ \{v_1, v_2, v_3\},\{v_4\},\{v_5\} \}$ \\
2 & $\{e_2\}$ & $\{ \{v_1\}, \{v_2\}, \{v_3,v_4,v_5\} \}$ \\
3 & $\{e_3\}$ & $\{ \{v_1, v_4\}, \{v_2\}, \{v_3\}, \{v_5\} \}$ \\
4 & $\{e_1,e_2\}$ & $\{ \{v_1, v_2, v_3, v_4, v_5\} \}$ \\
5 & $\{e_1,e_3\}$ & $\{ \{v_1, v_2, v_3, v_4\}, \{v_5\} \}$ \\
6 & $\{e_2,e_3\}$ & $\{ \{v_1, v_3, v_4, v_5\}, \{v_2\} \}$ \\
7 & $\{e_1,e_2,e_3\}$ & $\{ \{v_1, v_2,v_3, v_4, v_5\} \}$ \\
\end{tabular}
\end{center}
\caption{Enumerating the partitions induced by all sub-hypergraphs in ${\cal S}(H)$.}
\label{tab1}
\end{table}


\section{Estimating the modularity}
\label{section:algos}

In Section~\ref{section:search}, we showed that for hypergraph $H=(V,E)$,
the optimal partition of vertices in $V$ with respect to the strict modularity objective
function corresponds to the canonical representative of one 
of the equivalence classes of all sub-hypergraphs in ${\cal S}(H)$. 
This formulation provides a convenient way to describe algorithms to look for 
this optimal partition.

\spc

For very small hypergraphs $H=(V,E)$, we can enumerate all elements in ${\cal S}(H)$, find the corresponding vertex partitions and find the optimal solution with respect to the strict modularity function.
This is not feasible for most hypergraphs, and so we must rely on some heuristic search algorithms.
In this paper, we use two simple greedy algorithms, allowing us to work 
on hypergraphs with thousands of vertices. 
The development of good, efficient heuristic search algorithms over $q_H()$ is a topic for
further research.

\subsection{Greedy Random Algorithm}

We start with a very simple greedy random algorithm for hypergraph partitioning.
The details are given in Algorithm~\ref{alg:random} in the Appendix.
In a nutshell, we consider a random permutation of the edges.
For each edge in turn, we add it to the ``edge contribution'' factor if the overall
modularity improves.
We repeat this process for several permutations and keep the best result.

\subsection{Hypergraph CNM}

In the CNM algorithm for graph partitioning (Clauset-Newman-Moore; see, for example,~\cite{Newman} and~\cite{CNM}), we start with every vertex in its own part. 
At each step, we merge the two parts that yield the largest increase in modularity,
and we repeat until no such move exists.

\spc

We propose a simple version of the CNM algorithm for hypergraphs,
which we detail in Algorithm~\ref{algo:CNM_H} in the Appendix.
The idea in that algorithm is that in each step, we loop through every hyperedge not internal to a part, and we select the
one which, when it becomes internal to a newly merged part, yields the best modularity.




\section{Examples}
\label{section:ex}

\subsection{Synthetic Hypergraphs}

We generate hyperedges following the process described in \cite{Leordeanu2012}.
In a nutshell, we generate noisy points along 3 lines on the plane with different slopes, 30 points per line, to which we add 60 random points.
All sets of 3 or 4 points make up our hyperedges.
The hyperedges can be either all coming from the same line 
(which we call ``signal'') or not (which we call ``noise'').
We sample hyperedges for which the points are well aligned, 
and so that the expected proportion of signal vs.\ noise is 2:1.
We consider 3 different regimes: (i) mostly 3-edges, 
(ii) mostly 4-edges and (iii) balanced between 3 and 4-edges.
For the 3 regimes, we generate 100 hypergraphs and 
for each of the 300 hypergraphs, we apply the fast Louvain clustering
algorithm (see \cite{Blondel}) on
the weighted 2-section graph. In most cases, vertices coming from the same line are correctly put in the same part.

\spc 

In the left plot of  Figure \ref{fig:lines}, 
we plot the standard graph modularity vs.\ the Hcut value, 
which is simply the proportion of hyperedges that fall in two or more parts. 
The Louvain algorithm is not explicitly aiming at preserving the hyperedges, so we do not expect a high correlation between the two measures. 
In fact, fitting a regression line to the points from the balanced regime, we get a slope
of 0.0061 with $R^2$ value of 0.0008.

\begin{figure}[!ht]
\begin{center}
\includegraphics[angle=0,width=5.6cm]{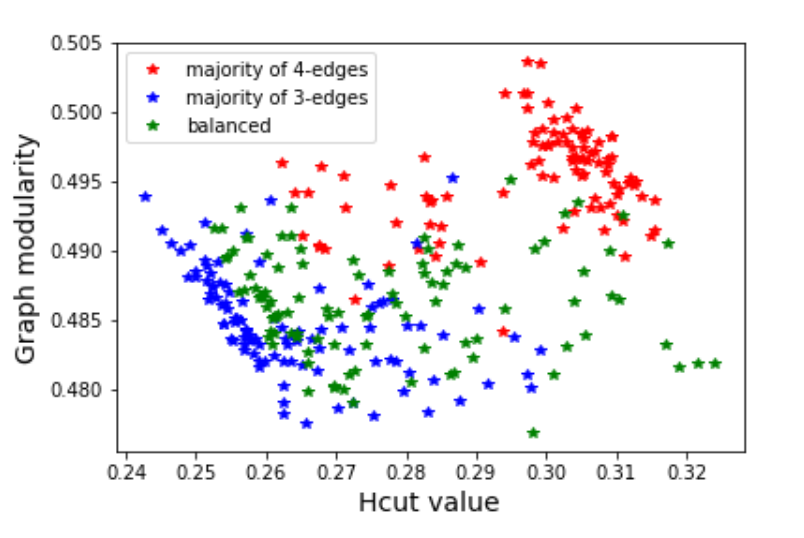}
\hspace{0.1cm}
\includegraphics[angle=0,width=5.4cm]{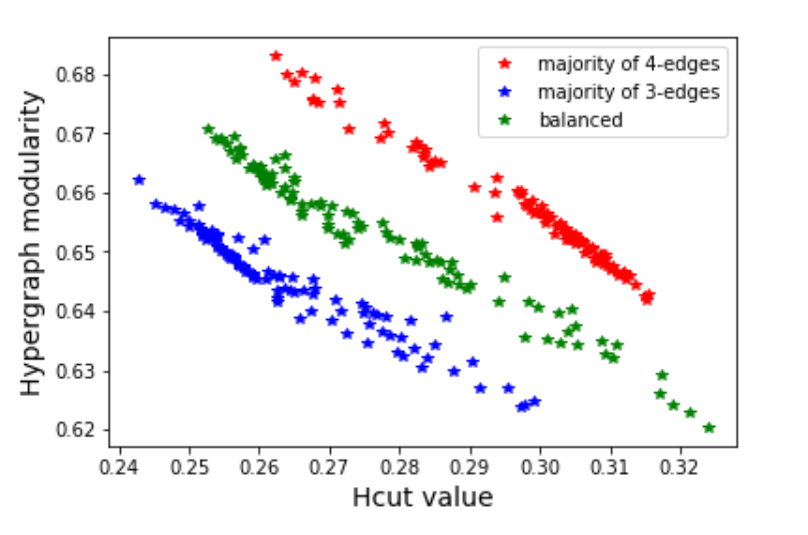}
\caption{Modularity vs Hcut: comparing graph and hypergraph modularity.}
\label{fig:lines}
\end{center}
\end{figure}

In the right plot Figure \ref{fig:lines}, we do the same, this time comparing our hypergraph
modularity with the Hcut values for the same partitions as in the left plot.
The correlation here is very high. For the balanced regime, linear regression yields
a slope of -0.6364 with $R^2$ value of 0.9693.
This is an illustration of the fact
that when we measure our proposed hypergraph modularity for different partitions, we 
are favouring keeping hyperedges in the same parts.







\subsection{DBLP Hypergraph}

The DBLP computer science bibliography database contains open bibliographic information on major computer science journals and proceedings. The DBLP database is operated jointly by University of Trier and Schloss Dagstuhl. The DBLP Paper data is available at \url{http://dblp.uni-trier.de/xml/}.

\spc

We consider a hypergraph of citations where each vertex represents an author and hyperedges are papers.
In order to properly match author names across papers we enhance the data with information scraped from journal web pages. The DBLP database contains the \url{doi.org} identifier. Since the same author names can be written differently we match author names of the paper across all three data sources, we use this information to obtain the journal name and retrieve paper author data directly from journal --- we update available author name data using ACM, ieexlpore, Springer and Elsevier/ScienceDirect databases. This can give good representation of author names for later matching.
For the analysis, we only kept the (single) large connected component.
We obtained a hypergraph with 1637 nodes,
865 edges of size 2, 470 of size 3, 152 or size 4 and 37 of size 5 to 7.


\spc

In Table \ref{table:dblp}, we show our results with the Louvain algorithm on the 2-section graph using modularity function $q_G()$,  
as well as the results with our Random and CNM algorithms on the hypergraph, using modularity function $q_H()$. We also ran our algorithms using the degree independent hypergraph modularity $q_H^{DI}()$, which gave us very similar results to using $q_H()$.
Note that Random algorithm is worse than CNM (which is to be expected).

\spc

Comparison of results of Louvain algorithm with CNM shows that there is a tradeoff between $q_H/q_H^{DI}$ and $q_G$ and importantly, Hcut value is lower for CNM algorithm.
The increased number of parts with our algorithms is mainly due to the presence of
singletons. 

\spc 

\begin{table}
\begin{center}
\begin{tabular}{c|c|c|c|c|c}
algorithm & $q_H()$ & $q_H^{DI}()$ & $q_G()$ & Hcut & \#parts \\
\hline
Louvain & 0.8613 & 0.8599 & 0.8805 & 0.1181 & 40 \\
Random    & 0.8485 & 0.8446 & 0.8198 & 0.0971 & 78 \\
CNM       & 0.8671 & 0.8655 & 0.8456 & 0.0945 & 92 \\
\end{tabular}
\caption{Results when partitioning the DBLP dataset with various algorithms. For Random algorithm we  presents the best result of 100 runs.}
\label{table:dblp}
\end{center}
\end{table}

Another observation is that the actual partitions obtained with 
objective function $q_G()$ (Louvain) and $q_H()$ (CNM, Random) are quite different. 
For the Louvain and CNM algorithms,
we found low values of 0.4355 for the ``adjusted RAND index'' and 0.4416 for the ``graph-aware adjusted RAND index'' (see \cite{Poulin2018}).
One of the difference lies in the number of edges of size 2, 3 and 4 that are cut
with the different algorithms, as we see in Table \ref{table:dblp-cut}. The algorithms based on $q_H()$
will tend to cut less of the larger edges, as compared to the Louvain
algorithm, at expense of cutting more size-2 edges.

\spc 

\begin{table}
\begin{center}
\begin{tabular}{c|c|c|c}
Algo & prop. of 2-edges cut & prop. of 3-edge cut & prop. of 4-edges cut \\
\hline
Louvain & 0.0382 & 0.1815 & 0.3158 \\
Random & 0.0509 & 0.1404 & 0.2039 \\
CNM & 0.0590 & 0.1277 & 0.1842 \\
\end{tabular}
\caption{Proportion of edges of size 2, 3 or 4 cut by various algorithms.}
\label{table:dblp-cut}
\end{center}
\end{table}





\section{Conclusion}
\label{section:conclusion}

In this paper, we presented a generalization of the Chung-Lu model for hypergraphs, which we used to define a modularity function on hypergraphs.
Interestingly, in hypergraph modularity case there is no one unique way to define modularity and we show that it depends on how strongly a user thinks that a hyperedges indicate members of the same community. If the belief is soft this leads to standard 2-section graph modularity. However, if it is strong a natural definition is strict hypergraph modularity, which we tested on numerical examples.

\spc

We have also showed that hypergraph modularity function can be simply ``specialized'' by considering  subsets of hyperedges independently. This can be useful, in particular, when a hypergraph consists of hyperedges representing different baseline hypergraphs, perhaps coming from different sources of data.

\spc

The objective of this paper was to develop a definition of hypergraph modularity. However, in order to show that this notion is numerically traceable, at least approximately, we provided the theoretical foundations for the development of algorithms using this modularity function that greatly reduce the solution search space.

\spc

A key natural question with any new measure is if it provides qualitatively different outcomes than existing ones. Therefore we have compared strict hypergraph modularity with a standard 2-section graph modularity. For this we have developed two simple heuristic algorithms. Using them we illustrated the fact that in comparison to 2-section graph modularity (optimized using Louvain algorithm) optimization using strict modularity function tends to cut a smaller number of hyperedges. Therefore the proposed measure is potentially highly valuable in application scenarios, where a hyperedge is a strong indicator that vertices it contains belong to the same community.

\spc

Hypergraph modularity is a new measure, and there is still a lot of work that should be done. First of all, the development of good, efficient heuristic algorithms would allow to look at larger hypergraphs. Such algorithms would allow us to perform a study over hypergraphs with different edge size distributions, comparing the hypergraph modularity function with other definitions such as graph modularity over the 2-section representation of the hyperedges, the degree-independent function $q_H^{DI}()$, and hypergraph modularity using the less strict majority rule. 


\section*{Acknowledgements}

The authors would like to thank Claude Gravel for useful discussions while 
developing the algorithms.

\clearpage

\section{Appendix}

\begin{figure}[!ht]
\begin{algorithm}[H]
\KwData{hypergraph $H=(V,E)$, number of steps $k$}
\KwResult{${\bf A}_{opt}$, a partition of $V$ with modularity $q_{opt}$}
Initialize $q_{opt}=-1$\;
$s \leftarrow 0$\;
\While{$s < k$}{
	Initialize ${\bf A}_{best} = \A$ with all $v \in V$ in its own part, $q_{best}$ is the corresponding modularity\; 
    Initialize $E^{'} = \emptyset$\; 
    Draw a random permutation $(e_1, \ldots, e_m)$ of $E$\; 
    $i \leftarrow 0$\;
    \While{$i < m$}{
      $i \leftarrow i+1$\;
      let $H^{'} = (V,E^{'} \cup \{ e_i \} )$\;
      find ${\bf A} = p(H')$ and compute $q_{dt}$ (degree tax)\;
      find $H^* = f([H^{'}])$ and compute $q_{ec} = |H^*|/m$ (edge contribution)\;
      \If{$q_{ec}-q_{dt} > q_{best}$}{
        $q_{best} = q_{ec}-q_{dt}$\;
        ${\bf A}_{best} = {\bf A}$\;
        $E^{'} = E^{'} \cup \{ e_i \}$\;
      }
    }
    \If{$q_{best} > q_{opt}$}{
    	$q_{opt} = q_{best}$\;
        ${\bf A}_{opt} = {\bf A}_{best}$\;
    }
    $s \leftarrow s + 1$\;
}
output: ${\bf A}_{opt}$ and $q_{opt}$\;
\caption{GreedyRandomPartition($H,k$) on a hypergraph $H$}
\label{alg:random}
\end{algorithm}
\end{figure}


\begin{figure}[!ht]
\begin{algorithm}[H]
\KwData{hypergraph $H=(V,E)$}
\KwResult{${\bf A}_{opt}$, a partition of $V$ with modularity $q_{opt}$}
Initialize $\A_{opt}$ the partition with all $v \in V$ in its own part, and $q_{opt}$ the corresponding  modularity\;
Initialize $E_0 = \emptyset$\;
\Repeat{$E_0 = E$}
{
  $q' = -1$\;
  \ForEach{$e \in E \setminus E_0$}{
    let $H'=(V,E_0 \cup \{e\})$\;
    find ${\bf A} = p(H')$ and compute $q_{dt}$ (degree tax)\;
    find $H^* = f([H^{'}]) = (V,E^*)$ and compute $q_{ec} = |H^*|/m$ (edge contribution)\;
    \If{$q_{ec}-q_{dt} > q'$}{
      $q' = q_{ec}-q_{dt}$\;
      ${\bf A}' = {\bf A}$\;
      $E' = E^*$\;
    }  
  }
  $E_0 = E'$\;
  \If{$q' \geq q_{opt}$}{
  	$q_{opt} = q'$\;
    ${\bf A}_{opt} = {\bf A}'$\;
  }
  [optional: break from loop if $q' < q_{opt}$]
}
output: ${\bf A}_{opt}$ and $q_{opt}$
\caption{SimpleCNM($H$) on a hypergraph $H$}
\label{algo:CNM_H}
\end{algorithm}
\end{figure}

\end{document}